\title{Linearly Mismatched Free-by-Cyclic Groups are Asynchronously Automatic}
\author[B. L. Jeffers]{Benjamin L. Jeffers}
\address{Trinity University, 1 Trinity Place, San Antonio, TX 78212}
\email{bjeffers@trinity.edu}
\author[B. Gustafson]{Benjamin Gustafson}
\address{Trinity University, 1 Trinity Place, San Antonio, TX 78212}
\email{bgustafs@trinity.edu}
\sloppy \allowdisplaybreaks[1]
\theoremstyle{plain}
\newtheorem{theorem}{Theorem}
\newtheorem{corollary}[theorem]{Corollary}
\theoremstyle{definition}
\newtheorem{definition}[theorem]{Definition}
\newtheorem*{remark}{Remark}
\newtheorem{example}{Example}
\newtheorem*{ack}{Acknowledgment}
\newcommand{\R}{\mathbb{R}}
\newcommand{\Z}{\mathbb{Z}}
\newcommand{\abs}[1]{\left\vert#1\right\vert}
\newcommand{\set}[1]{\left\{#1\right\}}
\begin{document}

\begin{abstract}
We call the family of free-by-cyclic groups defined by $G = \left< a, t, b_1, b_2, \ldots b_k \mid at = ta, b_1^{-1}tb_1 = a^{n_1}t, \ldots b_k^{-1}tb_k = a^{n_k}t \right>$ for $n_1, n_2, \ldots n_k \in \Z$ linearly mismatched since the automorphisms used to define the HNN extensions grow linearly at different rates. Using techniques from Elder's thesis, \cite{Elder}, namely words with a parallel stable letter structure, we prove that linearly mismatched free-by-cyclic groups are asynchronously automatic, and thus they have a solvable word problem. 
\end{abstract}
\maketitle\thispagestyle{empty}

\section{Introduction}
Automatic groups were introduced by William Thurston in 1986, after he noticed that some results of Jim Cannon on geometric properties of the Cayley graph of cocompact discrete hyperbolic groups could be restated in terms of finite state automata \cite{Holt}. The theory has remained an area of interest because of its strong connection to algorithmic properties of groups. For example, if a group has an automatic structure, then its word problem is solvable in quadratic time and it has a quadratic isoperimetric inequality \cite{WP}.  Another class of groups that has a sustained interest are biautomatic groups, they are defined by a stronger condition than the class of automatic groups. Since automatic groups were introduced it has been a question of whether or not every automatic group has a biautomatic structure, see Open Question 2.5.6 in \cite{WP}. It has been claimed that the group \[
G^{1, 2} = \left< a, b, c, t \mid at = ta, b^{-1}tb = at, c^{-1}tc = a^2t \right>
\]
is not biautomatic and it is a candidate for a group that is automatic but not biautomatic. However, the result was never published. We call such a group \emph{linearly mismatched} because it is formed by two HNN extensions, each of which is growing linearly, but at different rates. Indeed, the functions forming the HNN extensions are $\phi_1$ defined by $\phi_1\left(a\right) = a$, and $\phi_1\left(t\right) = at$ while the function defining the second HNN extension is $\phi_2\left(a\right) = a, \phi_2\left(t\right) = a^2t$, and thus we have \[
\phi_1^n(t) = a^nt, \;\; \phi_2^n(t) = a^{2n}t.
\] This phenomena can also be seen in a Cayley graph of the group, where there are planes corresponding to $a$ and $t$ and there is a strip corresponding to $b$ that comes straight out of the $at$-plane, and a strip corresponding to $c$ that comes up, but they are attached along different lines, as in figure \ref{cayley}. 

We demonstrate a weaker result than automaticity here, proving that the group $G^{1,2}$ is \emph{asynchronously} automatic. More generally, we show that the family of groups \[G
 = \left< a, t, b_1, b_2, \ldots b_k \mid at = ta, b_1^{-1}tb_1 = a^{n_1}t, b_2^{-1}tb_2 = a^{n_2}t, \ldots b_k^{-1}tb_k = a^{n_k}t \right>
\] 
for $n_1, n_2, \ldots n_k \in \Z$
are asynchronously automatic. Asynchronously automatic groups still have strong algorithmic properties. For example, an asynchronously automatic group satisfies an exponential isoperimetric inequality \cite{WP}, and they have a solvable word problem, although it is unknown if it is solvable in quadratic time \cite{Holt}. The class of asynchronously automatic groups is strictly weaker than the class of automatic groups. There are examples of groups that are asynchronously automatic but not automatic such as the Baumslag-Solitar groups \[
G_{p, q} = \set{x, y \mid yx^py^{-1} = x^q}
\] for $p \neq q$. In \cite{WP} it is shown that $G_{p, q}$ with $p \neq q$ is asynchronously automatic but not automatic by proving that their isoperimetric inequality is exponential. Unfortunately the same technique does not work to show that $G^{1, 2}$ is not automatic since it has a quadratic isoperimetric inequality \cite{BB00}. \par Previously, Gersten showed that the group  $G^{1, 2}$ is not CAT(0), \cite{Ger94}. It is also an open question of whether or not every CAT(0) group is biautomatic or automatic. Recently Leary and Minasyan found examples of CAT(0) groups that are not biautomatic, \cite{Leary-Min}. In 1997 Niblo and Reeves made significant progress towards this question by showing that all groups acting effectively, cellularly, properly discontinuously and cocompactly on a simply connected, non-positively curved cube complex have a synchronous biautomatic structure, \cite{NibloReeves}. It is also unknown which free-by-cyclic groups are automatic and biautomatic. 

\par To prove our main result we follow the techniques of Murray Elder in his thesis \cite{Elder} and use words that have a parallel stable letter structure. As a result of our main theorem we obtain that linearly mismatched free-by-cyclic groups have a solvable word problem. Although Schleimer showed in \cite{SS08} that free-by-cyclic groups already have a solvable word problem, we give a different proof using the fact that asynchronously automatic groups have a solvable word problem \cite{WP}.

\vspace{0.5cm}
\begin{ack}
We would like to thank Nata\v sa Macura for the suggestion of the problem and many helpful discussions. 
\end{ack}


\section{Preliminaries}
\subsection{Normal Forms}

A more in depth discussion of this section can be found in \cite{Lyndonn-Schupp}. 

\begin{definition}
Let $G$ be a group and let $I$ be an indexed set. Let $\set{H_i \mid i \in I}$ and $\set{K_i \mid i \in I}$ be families of subgroups of $G$ with $\set{\phi_i \mid i \in I}$ a family of maps such that each $\phi_i \colon H_i \to K_i$ is an isomorphism. Then the \emph{HNN extension with base $G$, stable letters $b_i, i \in I$, and associated subgroups $H_i$ and $K_i, i \in I$}, is the group \[
G^* = \left< G, b_i \left(i \in I\right) \mid b_i^{-1}h_ib_i = \phi_i\left(h_i\right), h_i \in H_i \right>
\]
\end{definition}

An HNN extension can be interpreted topologically as follows. More discussion of this topological interpretation can be found in \cite{Lyndonn-Schupp}. Let $X$ be a pathwise connected topological space and let $U$ and $V$ be subspaces of $X$ and let $h \colon U \to V$ be a homeomorphism. Let $I$ be the unit interval and let $C = U \times I$. Identify $U \times \set{0}$ with $U$ and identify $U \times \set{1}$ with $V$ by $h$. Let $Z$ be the resulting space. Let $\pi_1\left(X\right)$ be the fundamental group of $X$. Then by the Siefert-van Kampen theorem, the fundamental group of $Z$ can be written as \[
\pi_1\left(Z\right) = \left< \pi_1\left(X\right) , t \mid t^{-1}\pi_1\left(U\right)t = \pi_1\left(V\right) \right>.
\] So essentially we are extending a space $U$ by the unit interval and identifying the ends to $U$ and $V$ through the homeomorphism $h$, and the resulting fundamental group is an HNN extension. For this reason, these groups are also sometimes called \emph{mapping tori}, since we form a torus-like object, but instead of identifying the sides of a square, we identify ends of a space through a homeomorphism. 

\begin{example}\label{example1}
Let $G = \langle a, b, c \rangle$, the nonabelian free group on three generators. Then let $\phi\left(a\right) = a, \phi\left(b\right) = ba, \phi\left(c\right) = ca^2$. This is an automorphism of $G$, and thus our associated subgroups are $G$ and $G$. This gives \[
G^* = \langle a, b, c, t \mid tat^{-1} = a, tbt^{-1} = ba, tct^{-1} = ca^2 \rangle. 
\]
\end{example}

From now on the letter $g$, with or without subscripts will denote an element of $G$, and if it is thought of as a word in $G$, then it is a word on the generators of $G$, meaning it contains no occurrences of $t^{\pm 1}$. The letter $\epsilon$, with or without subscripts, will denote $\pm 1$. 

\begin{definition}
A sequence $g_0, b_{i_1}^{\epsilon_1}, g_1, \ldots , b_{i_n}^{\epsilon_n} g_n$ is said to be \emph{reduced} if there is no consecutive subsequence $b_{i_k}^{-1}, g_i, b_{i_k}$ with $g_i \in H_i$, or $b_{i_m}g_j b_{i_m}^{-1}$ with $g_j \in K_j$ for $i, j > 0$.
\end{definition}

\begin{example}
In the group $G = \langle a, b, c, t \mid tat^{-1} = a, tbt^{-1} = ba, tct^{-1} = ca^2 \rangle $, the word $a^2tbt^{-1}c^2t$ is not reduced since it contains $tbt^{-1}$, while the word $a^2bac^2t$ is reduced. 
\end{example}

\begin{definition}
A \emph{normal form} is a sequence $g_0, b_{n_1}^{\epsilon_1}, \ldots , b_{n_k}^{\epsilon_k}, g_k$ where \begin{enumerate}
    \item $g_0$ is an arbitrary element of $G$,
    \item If $\epsilon_i = -1$, then $g_i$ is a representative of a coset of some $H_i$ in $G$,
    \item if $\epsilon_i = +1$, then $g_j$ is a representative of a coset of some $K_j$ in $G$, and 
    \item there is no consecutive subsequence $b_i^\epsilon, 1, b_i^{\epsilon}$
\end{enumerate}
\end{definition}

\begin{example}
In $G = \langle a, b, c, t \mid tat^{-1} = a, tbt^{-1} = ba, tct^{-1} = ca^2 \rangle$, the word $a^2bac^2ta^{-2}b$ is a normal form. 
\end{example}

This leads us to the normal form theorem which we will use to prove that our family of groups is asynchronously automatic. 

\begin{theorem}[The Normal Form Theorem for HNN Extensions] Let $G^* = \langle G, b_1, \ldots, b_n \mid b_i^{-1}h_ib_i = \phi\left(h_i\right), h_i \in H_i \rangle$ be an HNN extension. Then 
\begin{enumerate}
    \item The group $G$ is embedded in $G^*$ by the map $g \to g$. If $g_0b_{i_i}^{\epsilon_1} \cdots b_{i_n}^{\epsilon_n} g_n = 1$ in $G^*$ with $n \geq 1$, then $g_0, b_{i_1}^{\epsilon_1}, \ldots , b_{i_n}^{\epsilon_n}, g_n$ is not reduced. 
    \item Every element $w$ of $G^*$ has a unique representation as $w = g_0b_{i_1}^{\epsilon_1} \cdots b_{i_n}^{\epsilon_n}g_n$ where $g_0, b_{i_1}^{\epsilon_1}, \ldots , b_{i_n}^{\epsilon_n}, g_n$ is a normal form. 
\end{enumerate}
\end{theorem}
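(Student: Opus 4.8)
The plan is to prove this the classical way, via van der Waerden's permutation trick, following the treatment in Lyndon--Schupp rather than any geometric argument. Existence of a normal form for each $w \in G^*$ is the easy half: start from any expression $g_0 b_{i_1}^{\epsilon_1} g_1 \cdots b_{i_n}^{\epsilon_n} g_n$ for $w$; as long as it is not reduced it contains a pinch $b_i^{-1} h b_i$ with $h \in H_i$ or $b_i k b_i^{-1}$ with $k \in K_i$, which we delete using $\phi_i$ or $\phi_i^{-1}$, merging the two neighbouring $G$-syllables and lowering the number of stable letters by two; after finitely many steps the sequence is reduced. Then fix once and for all, for each $i \in I$, a transversal of the right cosets of $H_i$ and a transversal of the right cosets of $K_i$, each containing $1$; rewrite each $g_i$ with $i \ge 1$ as $h\bar g_i$ or $k\bar g_i$ and slide the factor $h$ or $k$ leftwards through $b_{i_i}^{\epsilon_i}$ via $b_i^{-1}h = \phi_i(h) b_i^{-1}$ (resp.\ $b_i k = \phi_i^{-1}(k) b_i$), absorbing it into $g_{i-1}$ and re-reducing if some $\bar g_i = 1$ creates a pinch $b_i^{\epsilon} 1 b_i^{-\epsilon}$. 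This terminates at a normal form; it is routine and does not use the relations beyond what has been written.

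The real content is uniqueness in (2) together with Britton's Lemma in (1), and for this I would build a permutation representation on the set $\Omega$ of all normal-form sequences. Define a map $\psi$ on the generators of $G^*$: for $g \in G$, let $\psi(g)$ send $(g_0, b_{i_1}^{\epsilon_1}, g_1, \ldots, g_n)$ to $(gg_0, b_{i_1}^{\epsilon_1}, g_1, \ldots, g_n)$; for the stable letter $b_i$, write $g_0 = k\bar g_0$ with $k \in K_i$ and $\bar g_0$ the chosen representative of the right coset $K_i g_0$, and let $\psi(b_i)$ return $(\phi_i^{-1}(k), b_i^{+1}, \bar g_0, b_{i_1}^{\epsilon_1}, g_1, \ldots, g_n)$, except that when $\bar g_0 = 1$ and $b_{i_1}^{\epsilon_1} = b_i^{-1}$ the freshly created $b_i^{+1}$ cancels that $b_i^{-1}$ and $\psi(b_i)$ returns $(\phi_i^{-1}(k) g_1, b_{i_2}^{\epsilon_2}, g_2, \ldots, g_n)$ instead; define $\psi(b_i^{-1})$ symmetrically with $H_i$ and $\phi_i$ in place of $K_i$ and $\phi_i^{-1}$. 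One checks that each $\psi(b_i^{\pm 1})$ actually lands in $\Omega$, that $\psi(b_i)$ and $\psi(b_i^{-1})$ are mutually inverse bijections, that $\psi(g)\psi(g') = \psi(gg')$, and---the decisive point---that $\psi(b_i^{-1})\,\psi(h)\,\psi(b_i) = \psi(\phi_i(h))$ for every $h \in H_i$. Since the defining relations of $G^*$ are then respected, $\psi$ extends to a homomorphism $\Psi \colon G^* \to \mathrm{Sym}(\Omega)$.

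Now write $e = (1)$ for the trivial normal form. Given a normal-form sequence $\sigma = (g_0, b_{i_1}^{\epsilon_1}, \ldots, b_{i_n}^{\epsilon_n}, g_n)$, let $w_\sigma \in G^*$ be the element it spells; applying the permutations $\psi(g_n), \psi(b_{i_n}^{\epsilon_n}), \ldots, \psi(b_{i_1}^{\epsilon_1}), \psi(g_0)$ to $e$ in turn, a short induction on $n$ shows the outcome is $\sigma$ itself, so $\Psi(w_\sigma)(e) = \sigma$. Hence distinct normal forms spell distinct elements of $G^*$, which is the uniqueness assertion of (2). For (1): if $g \in G$ dies in $G^*$ then the length-$0$ normal forms $(g)$ and $(1)$ spell the same element, so $g = 1$, giving the embedding $g \mapsto g$; and if some reduced sequence with $n \ge 1$ spelled the identity, then by the existence argument---which, starting from a reduced sequence, never creates a pinch and so never changes $n$---the identity would possess a normal form with $n \ge 1$ stable letters, contradicting uniqueness of its length-$0$ normal form $(1)$.

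I expect the only real obstacle to be the bookkeeping in the second paragraph: checking that $\psi(b_i)$ is well defined and bijective with inverse $\psi(b_i^{-1})$, and verifying the key identity $\psi(b_i^{-1})\psi(h)\psi(b_i) = \psi(\phi_i(h))$, each of which unwinds into a case analysis on the leading syllable of the sequence being acted on and on whether the relevant coset representative is trivial. Everything else---existence, the induction that $\Psi(w_\sigma)(e) = \sigma$, and the deduction of Britton's Lemma and the embedding---is then immediate.
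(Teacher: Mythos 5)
The paper states this theorem without proof, simply citing Lyndon--Schupp for the background; your argument is precisely the standard proof from that source (existence by removing pinches and then normalizing with fixed coset transversals, uniqueness and Britton's Lemma via van der Waerden's permutation representation of $G^*$ on the set of normal forms), and it is correct.
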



\subsection{Automatic Groups}
This section is discussed in much greater detail in \cite{WP}. Throughout, we take $\epsilon$ to be the empty word. For a language $L$ over an alphabet $A$, say the generators of a group $G$, we obtain a map $\pi \colon L\to G$ where a word $w_1w_2 \cdots w_n$ is reduced according to the relations of $G$. We write the image of a word $w$ under $\pi$ as $\overline{w}$. When we speak of a path in the Cayley graph corresponding to a word $w$, we denote it by $\widehat{w}$. Finally we use $w\left(t\right)$ to mean the first $t$ letters of a word $w$.

\begin{definition}
Let $G$ be a group. A \emph{synchronous automatic structure} on $G$ consists of a set $A$ of semigroup generators of $G$, a finite state automaton $W$ over $A$, and finite state automata $M_x$ over $\left(A, A\right)$, for $x \in A \cup \set{\epsilon}$ satisfying the following conditions:
\begin{enumerate}
    \item The map $\pi \colon L\left(W\right) \to G$ is surjective.
    \item For $x \in A \cup \set{\epsilon}$, we have $\left(w_1, w_2\right) \in L\left(M_x\right)$ if and only if $\overline{w_1x} = \overline{w_2}$ and both $w_1$ and $w_2$ are elements of $L\left(W\right)$. 
\end{enumerate} The automaton $W$ is called the \emph{word acceptor}, $M_{\epsilon}$ the \emph{equality recognizer}, and each $M_x$, for $x \in A$, a \emph{multiplier automaton}. The equality recognizer takes in two words and accepts the pair, if and only if the words are equal. Similarly, $M_x$ takes in a pair of words and accepts them if and only if they are equal up to a right multiplication by $x$. A \emph{synchronous automatic group} is one that admits a synchronous automatic structure. 
\end{definition}

There is another equivalent definition that will be very useful for us.
\begin{theorem}
Let $G$ be a group and let $A$ be a finite set of semigroup generators for $G$. Let $W$ be a finite state automaton over $A$ and suppose that $\pi \colon L\left(W\right) \to G$ is surjective. Then $A$ and $W$ are part of a synchronous automatic structure on $G$ if and only if there is a number $k$ with the property that whenever two strings $w_1$ and $w_2$ accepted by $W$ are such that $\overline{w_1x} = \overline{w_2}$ for some $x \in A$, the corresponding paths $\widehat{w_1}$ and $\widehat{w_2}$ are a uniform distance less than $k$ apart. 
\end{theorem}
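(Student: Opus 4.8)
The plan is to prove the two implications separately. For the forward direction, suppose $A$, $W$, and automata $M_x$ for $x\in A\cup\{\epsilon\}$ form a synchronous automatic structure, and let $w_1,w_2\in L(W)$ with $\overline{w_1x}=\overline{w_2}$ for some $x\in A$. By condition (2) of the definition the padded pair $(w_1,w_2)$ lies in $L(M_x)$, so the run of $M_x$ on it passes only through states that lie on some accepting run; call such states \emph{live}. The key observation is that from any live state there is a path to an accept state of length at most $|Q_{M_x}|$, reading some padded pair of words $(v_1,v_2)$ with $|v_1|,|v_2|\le |Q_{M_x}|$. Reading the prefixes $w_1(t),w_2(t)$ brings $M_x$ to a live state; completing along such a short path exhibits a pair $(w_1(t)v_1,\,w_2(t)v_2)$ accepted by $M_x$, so $\overline{w_1(t)}\,\overline{v_1}\,x=\overline{w_2(t)}\,\overline{v_2}$, and therefore $\overline{w_1(t)}^{-1}\overline{w_2(t)}=\overline{v_1}\,x\,\overline{v_2}^{-1}$ has word length at most $2|Q_{M_x}|+1$. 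Hence $\widehat{w_1}$ and $\widehat{w_2}$ stay pointwise within $2|Q_{M_x}|+1$; handling the short tail (where one word is already exhausted, so that coordinate reads only padding) in the same way and setting $k=1+\max_x\bigl(2|Q_{M_x}|+1\bigr)$ gives the desired constant.

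For the reverse direction, assume $\pi\colon L(W)\to G$ is surjective and the fellow-traveler constant $k$ exists; I will build each $M_x$ explicitly. Its states are triples $(q_1,q_2,g)$ where $q_1,q_2$ are states of $W$ and $g$ ranges over the ball $B$ of radius $k$ about $1$ in the Cayley graph of $G$ with respect to $A$ (a finite set, since $A$ is finite), together with a single trap ``fail'' state. Reading a padded letter $(y_1,y_2)$ updates $q_1,q_2$ by the transition function of $W$ (the padding symbol keeps a coordinate fixed, and is only permitted once that coordinate's $W$-state is already accepting) and updates the group coordinate by $g\mapsto \overline{y_1}^{-1}\,g\,\overline{y_2}$; if the result leaves $B$, the automaton moves to fail. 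The start state is $(q_1^0,q_2^0,1)$ and the accept states are the $(q_1,q_2,x)$ with $q_1,q_2$ accepting in $W$ (taking $x=1$ to build the equality recognizer $M_\epsilon$). By induction the coordinate records $\overline{w_1(t)}^{-1}\overline{w_2(t)}$ as long as the automaton has not failed, so $M_x$ accepts $(w_1,w_2)$ precisely when $w_1,w_2\in L(W)$ and $\overline{w_1}^{-1}\overline{w_2}=x$, i.e.\ $\overline{w_1x}=\overline{w_2}$. Crucially, if $w_1,w_2\in L(W)$ and $\overline{w_1x}=\overline{w_2}$ the fellow-traveler hypothesis keeps the coordinate in $B$ for all $t$, so such pairs are never sent to fail; and if a pair of $W$-accepted words ever leaves $B$ then, by the contrapositive of the hypothesis, it cannot satisfy $\overline{w_1x}=\overline{w_2}$, so rejecting it is harmless. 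Together with surjectivity of $\pi$ this yields a synchronous automatic structure with word acceptor $W$.

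I expect the main care to go into the padding bookkeeping: tracking the state of $W$ on each coordinate after the shorter word has ended (it must rest in an accepting state of $W$), verifying that the ``short completion'' used in the forward direction is a genuine accepting run of $M_x$ so that the defining property of $M_x$ applies, and checking the boundary range $\min(|w_1|,|w_2|)\le t\le\max(|w_1|,|w_2|)$. The conceptual heart of the reverse direction is simply that the fellow-traveler bound forces the difference $\overline{w_1(t)}^{-1}\overline{w_2(t)}$ into a fixed finite ball, which is exactly the finite memory a synchronous two-tape automaton can afford; the trap state then disposes of every other pair, which is legitimate precisely because the hypothesis is an implication rather than an equivalence.
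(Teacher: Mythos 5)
The paper itself gives no proof of this statement --- it is imported from \cite{WP}, where it appears as the standard fellow-traveler characterization of automatic groups --- so there is nothing in-paper to compare against. Your argument is correct and is essentially the textbook proof: the forward direction via the ``live state plus short accepting completion'' bound of $2\abs{Q_{M_x}}+1$ on the word differences $\overline{w_1(t)}^{-1}\overline{w_2(t)}$, and the reverse direction via the word-difference automaton whose third coordinate is confined to the ball of radius $k$, with a trap state absorbing pairs that leave it. The padding bookkeeping you defer (well-formedness of padded pairs, resting the exhausted coordinate in an accepting state of $W$) is genuinely routine and does not hide any gap.
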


We will be proving that a specific group has a slightly weaker structure than an automatic one. Namely, we will show that our linearly mismatched free-by-cyclic is \emph{asynchronously automatic}. The difference is that when we look at the paths $\widehat{w}$ and $\widehat{wx}$ for some $x \in A$, we allow them to travel at different speeds so that they stay within a bounded distance of each other.

\begin{definition}
Paths $w_1$ and $w_2$ are said to \emph{$k$-fellow travel} if $d\left(w\left(t\right), u\left(t\right)\right) \leq k$ for each $t \in \R$ with $t \geq 0$. The two paths are \emph{asynchronous} $k$-fellow travelers if there is a non-decreasing proper continuous function $\phi \colon [0, \infty) \to [0, \infty)$ such that $d\left(w\left(t\right)\right), u\left(\phi\left(t\right)\right)) \leq k$. This means that any point on $w$ is within $k$ of some point on $u$ and vise versa. A language $L$ enjoys the (asynchronous) fellow traveler property if there is a constant $k$ such that for each $u, w \in L$, $w$ and $u$ (asynchronously) $k$-fellow travel. 
\end{definition}

\begin{definition}
A group $G$ is \emph{asynchronously automatic} if there is a regular language $L$ surjecting to $G$ which is finite to one, i.e., no element of $G$ has an infinite preimage, and which satisfies an asynchronous $k$-fellow traveler property for some $k \in \R$. 
\end{definition}

\begin{remark}
Asynchronously automatic groups can be defined in the same manner as automatic groups, in terms of an asynchronous finite automata, but this definition is better suited to the purposes of this paper. 
\end{remark}

So to prove that our group is asynchronously automatic we need to find a regular language that surjects onto $G$, this will be our normal form. Then we need to prove that the language has the asynchronous $k$-fellow traveller property. Before we get to the proof we just need one more quick definition that we will use throughout.  

\begin{definition}
We say a group is \emph{(asynchronously) combable} if it has a language surjecting to it which has the (asynchronous) fellow traveler property. 
\end{definition}

\section{Proof of Main Result}

In example \ref{example1}, instead of viewing our group as a single HNN extensions, we can view it as a double HNN extension of $\Z^2$, which is more helpful for our purposes. We can write $G = \langle a, t \mid at = ta \rangle$. Then we can first do a HNN extension by $b$, where $b^{-1}tb = at$:\[
G^* = \langle a, t, b \mid at = ta, b^{-1}tb = at \rangle.
\] Then we can perform another HNN extension by $c$ with the relation $c^{-1}tc = a^2t$: \[
G^{**} = \langle at = ta, b^{-1}tb = at, c^{-1}tc = a^2t \rangle.
\] In the first HNN extension the associated subgroups are $\langle t \rangle$ and $\langle at \rangle$. In the second extension our associated subgroups are $\langle t \rangle$ and $\langle a^2t \rangle$. 

\par Geometrically, our group can be viewed as a copy of $\Z^2$, which is our $at$-plane, and along the line $a=t$ we have a $b$-strip coming up, and along the $2a=t$ line there is a $c$ strip coming up. Then this repeats over and over. It has a tree-like structure. Once we go up or down on a $b, b^{-1}, c,$ or $c^{-1}$, we must go back down on the same strip to get back to the original plane. This can be seen in figure \ref{cayley}. 
\begin{figure}[h!]
    \centering
    \includegraphics[width=15cm]{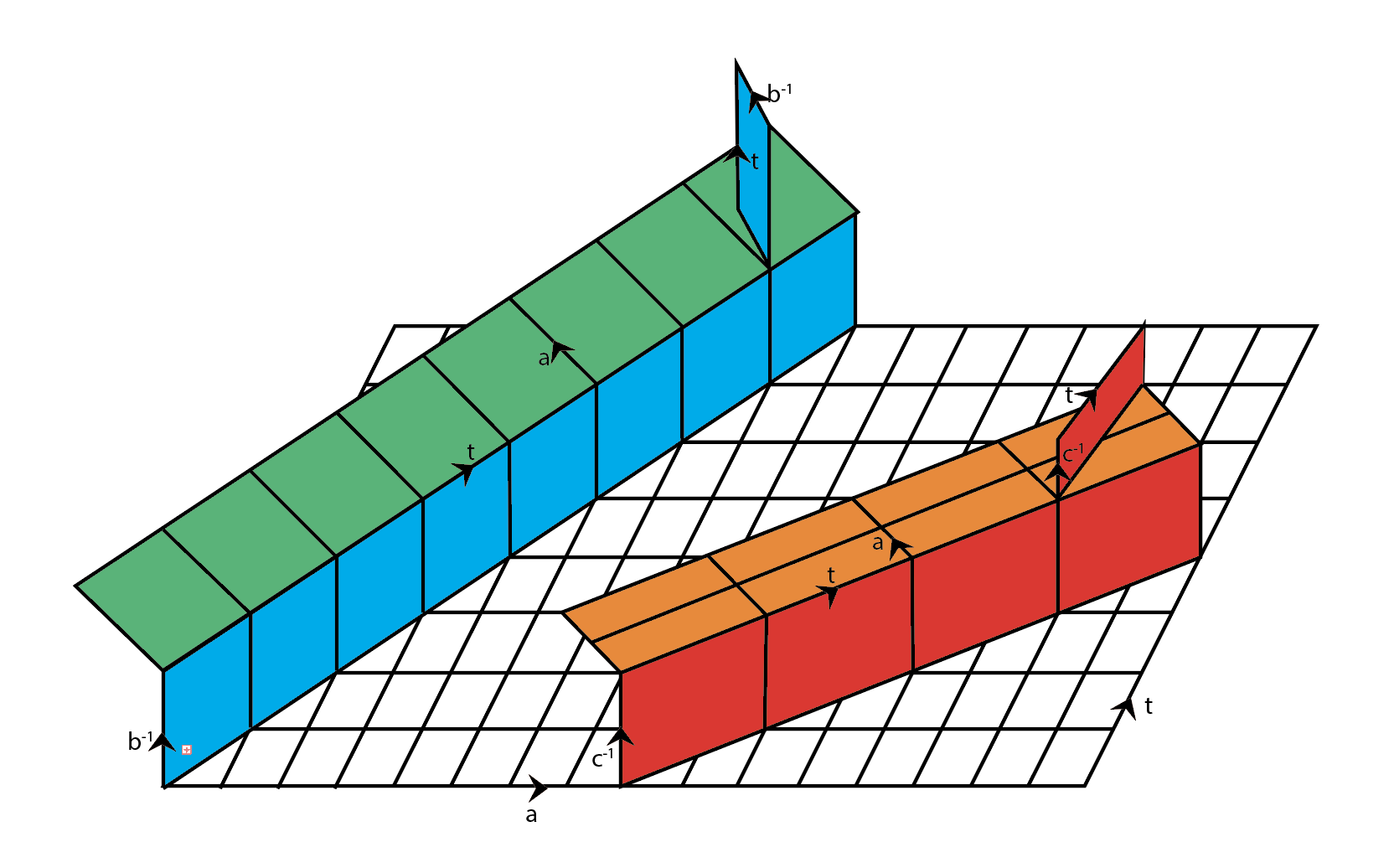}
    \caption{One layer of the Cayley graph of $G = \left<a, b, c, t \mid at=ta b^{-1}tb = at, c^{-1}tc = a^2t \right>$}
    \label{cayley}
\end{figure}

\begin{theorem}
The group \[
G = \left< a, t, b_1, b_2, \ldots b_k \mid at = ta, b_1^{-1}tb_1 = a^{n_1}t, b_2^{-1}tb_2 = a^{n_2}t, \ldots b_k^{-1}tb_k = a^{n_k}t \right>
\] 
for $n_1, n_2, \ldots n_k \in \Z$ is asynchronously automatic. 
\end{theorem}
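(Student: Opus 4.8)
The plan is to use the description of $G$, highlighted just above, as an HNN extension with base $\Z^2=\langle a,t\mid at=ta\rangle$ and stable letters $b_1,\dots,b_k$, where $b_i$ carries the isomorphism $\langle t\rangle\to\langle a^{n_i}t\rangle$ sending $t\mapsto a^{n_i}t$. In $\Z^2$ every coset of $\langle t\rangle$ and of $\langle a^{n_i}t\rangle$ has a unique representative of the form $a^p$, so by the Normal Form Theorem each element of $G$ is named by exactly one word of the ``parallel stable letter'' shape
\[
a^{p_0}\,t^{q_0}\,b_{i_1}^{\epsilon_1}\,a^{p_1}\,b_{i_2}^{\epsilon_2}\,a^{p_2}\cdots b_{i_n}^{\epsilon_n}\,a^{p_n},
\]
the only constraint being that $p_j\neq 0$ whenever $i_j=i_{j+1}$ and $\epsilon_j=-\epsilon_{j+1}$. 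Let $L$ be the set of all such words over $A=\{a^{\pm1},t^{\pm1},b_1^{\pm1},\dots,b_k^{\pm1}\}$, each power $a^{p_j}$ and $t^{q_0}$ written as a block of a letter or a block of its inverse; $L$ will be the regular language witnessing asynchronous automaticity.

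First I would check that $L$ is regular, which is routine: a deterministic automaton need only remember which block of the shape it is currently reading and, inside an $a$-block that immediately follows a stable letter, whether that block is still empty, so as to enforce the no-cancellation condition. Next, $\pi\colon L\to G$ is onto by the Normal Form Theorem and is injective because distinct normal forms name distinct elements; in particular it is finite-to-one. What remains is the asynchronous fellow traveler property: for every $u\in L$, every $x\in A$, and the representative $v\in L$ of $\overline{u}x$, the paths $\widehat u$ and $\widehat v$ must be asynchronous $k$-fellow travelers for one uniform $k$.

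The cases $x=a^{\pm1}$ and $x=b_i^{\pm1}$ are easy: passing from the normal form of $\overline u$ to that of $\overline ux$ changes only the final $a$-block by one letter, or appends or deletes a single stable letter at the end, so $\widehat u$ and $\widehat v$ stay within distance $1$ of each other under an obvious reparametrization. The real content is $x=t^{\pm1}$. Here one must resist collecting all the $t$'s at the right end of the word---that choice of normal form fails the fellow traveler property, essentially because a single right multiplication by $b_i$ would then force an $a^{n_i m}$ block to be inserted far to the left---and instead push the trailing $t^{\pm1}$ leftward through the word using only $at=ta$, the defining relation $b_i^{-1}tb_i=a^{n_i}t$, and its consequence $b_i(a^{n_i}t)b_i^{-1}=t$. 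The outcome is that $q_0$ changes by $\pm1$, the skeleton $b_{i_1}^{\epsilon_1}\cdots b_{i_n}^{\epsilon_n}$ is untouched, and each exponent $p_j$ is perturbed by at most $2\max_i|n_i|$.

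The step I expect to be the main obstacle is the estimate that makes the fellow traveling work. Writing $u_j=a^{p_0}t^{q_0}b_{i_1}^{\epsilon_1}\cdots b_{i_j}^{\epsilon_j}a^{p_j}$ for the prefix of $u$ through the $j$-th $a$-block and $v_j$ for the analogous prefix of $v$, one must show that $\overline{u_j}^{\,-1}\,\overline{v_j}=a^{c_j}t^{\pm1}$ with $|c_j|\le\max_i|n_i|$ for all $j$. I would prove this by induction on $j$: the two identities $b_i^{-1}tb_i=a^{n_i}t$ and $b_i(a^{n_i}t)b_i^{-1}=t$ are exactly what cause the correction terms to telescope rather than accumulate, so the successive milestones of $\widehat u$ and $\widehat v$ remain a bounded distance apart. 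Granting this, reparametrize $\widehat u$ and $\widehat v$ to traverse corresponding blocks at proportional speeds---with one path idling at a milestone while the other clears a longer block---so that at every intermediate point the distance between the two is at most the bounded prefix gap plus the bounded block-parameter gap. This gives the asynchronous $k$-fellow traveler property with $k=O(\max_i|n_i|)$, and since $L$ is a regular language that surjects finitely onto $G$ with this property, $G$ is asynchronously automatic.
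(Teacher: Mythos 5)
Your proposal is correct and follows essentially the same route as the paper: the same HNN normal form language $L_{\Z^2}\bigl(\{b_i^{\pm 1}a^p\}\bigr)^*$ with coset representatives $a^p$, and the same key computation that a trailing $t^{\pm 1}$ propagates leftward through the stable letters via $b_i^{-1}t = (a^{n_i}t)b_i^{-1}$ and $b_i(a^{n_i}t) = tb_i$, perturbing each $a$-exponent by a bounded amount and leaving the stable-letter skeleton intact. The only difference is organizational: you phrase this as a telescoping estimate on prefix milestones indexed by position in the word, while the paper runs an induction on word length over the extended letter set $\{a^{\pm1},t^{\pm1},(a^{n_i}t)^{\pm1}\}$ — the same argument in different clothing.
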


\begin{proof}
For the sake of simplicity, we prove the theorem for the group $G = \left< a, t, b, c \mid at = ta, b^{-1}tb = a^mt, c^{-1}tc = a^nt \right>$.
 The proof is similar for an arbitrary number of HNN extensions.
 
First, we choose the set  $\set{a^i \mid i \in \Z}$ to be a set of coset representatives for each of our associated subgroups: $\left< t \right>$, $\left< a^m t \right>$, and $\left< a^nt \right>$. 
Clearly, the set $\set{a^i \mid i \in \Z}$ is a regular language.
Then we choose a regular language that will be a normal form for our copy of $\Z^2$ which will be $L_{\Z^2} =\set{a^it^j \mid i, j \in \Z}$.
The normal form $L_{\Z^2}$ is a synchronous 2-combing of $\Z^2$.
Now, by the normal form theorem we have that
\[
L = L_{\Z^2}
    \left(
        \{r a^i | r \in \{b,b^{-1},c,c^{-1}\}, i \in \Z \} 
    \right)^* 
\]
is a normal form.
So, a word $w \in L$ has the form 
$w = w_0 r_1 w_1 \cdots r_l w_l$ where $w_0 = a^{\alpha} t^{\beta}$, each $r_i \in \{b,b^{-1},c,c^{-1}\}$, and each $w_i = a^{\gamma}$.

We now show that $L$ has the asynchronous fellow traveler property.
Let $w$ be a word in $L$. 
We say two words $w, u$ have \emph{parallel stable letter structure} if $u = u_0 r_1 u_1 \cdots r_l u_l$ for the same set of stable letters, and each stable letter is reduced.
For our first case of the fellow-traveler property, let $r$ be a stable letter.
Then $wr \in L$ unless $w = w'r^{-1}$, in this case $wr = w' \in L$. 
Next, let $x \in \set{a^{\pm 1}, t^{\pm 1}, \left(a^mt\right)^{\pm 1}, \left(a^nt\right)^{\pm 1}}$.
We include everything besides $a^{\pm 1}, t^{\pm 1}$ for the purposes of induction.
Assume for all words $w'$ with $\abs{w'} < \abs{w}$ there is an $L$-word $u =_{G} w'x$ such that $w'$ and $u$ 2-fellow travel and $u, w'$ have parallel stable letter structure. 
\par We now go through the cases of adding the letter $x$ to the end of the word $w$.
\begin{enumerate}
    \item If $w = w_0 \in L_{\Z^2}$, then the $L$-word $wx$ will 2-fellow travel $w$ since $L_{\Z^2}$ is a 2-combing. 
    \item If $r_n = b^{-1}$, then $w = w'b^{-1}a^i$.
    \begin{itemize}
        \item If $x = a^{\pm 1}$, then $wa^{\pm 1} = w'b^{-1}a^{i \pm 1} \in L$.
        \item If $x = t^{\pm 1}$, then $wt^{\pm 1} = w'b^{-1}a^it^{\pm 1} = w'\left(a^mt\right)^{\pm 1}b^{-1}a^i = ub^{-1}a^i$ by induction. It can be seen in figure \ref{fig:k-fellow}, for the group $G = \left< a, b, c, t \mid at =ta b^{-1}tb = at, c^{-1}tc = a^2t \right>$ that $w'b^{-1}a^it$ and $w'a^2tb^{-1}a^i$ will 3-fellow travel. In general these words with $m+1$, travel. Indeed, starting at $w'$ first follow the path for $w'\left(a^mt\right)^{\pm 1}b^{-1}a^i$ along the $a^mt$, at this point the words $w'$ and $w'a^mt$ are at a distance of $m+1$ apart. Then simultaneously, go up $b^{-1}$ on both words, now they are a distance 1 apart, following the last part of both words, they never get more than a distance of 1 apart. Thus they asynchronously $m+1$-fellow travel. In general, the group $G = \left< a, t, b_1, b_2, \ldots b_k \mid at = ta, b_1^{-1}tb_1 = a^{n_1}t, b_2^{-1}tb_2 = a^{n_2}t, \ldots b_k^{-1}tb_k = a^{n_k}t \right>$ will asynchronously $\max\set{\left(n+1\right), \left(m+1\right)}$-fellow travel. A similar argument works in all other cases. 
        \begin{figure}
            \centering
            \includegraphics[width=12cm]{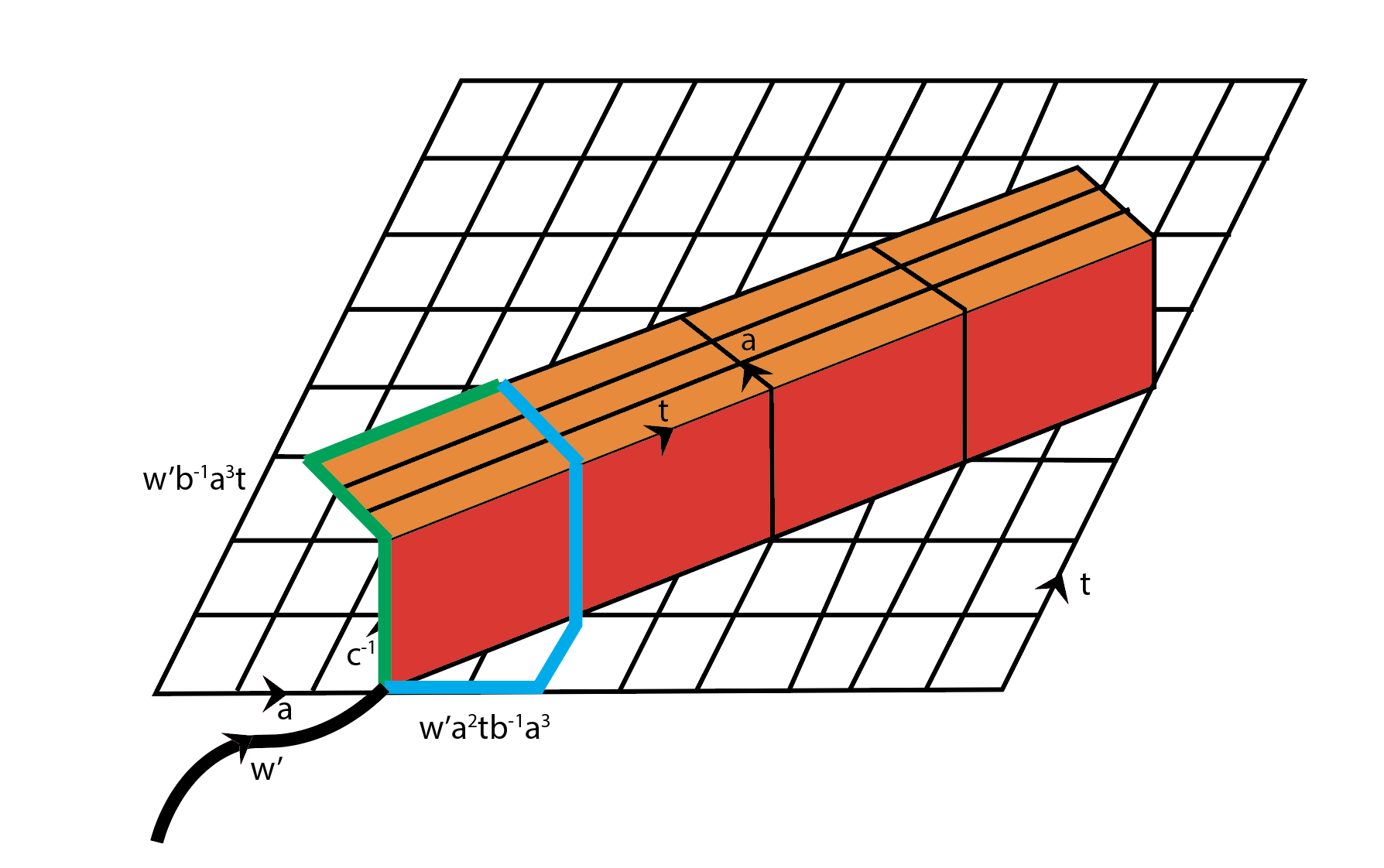}
            \caption{The words $w'b^{-1}a^3t$ and $w'a^2tb^{-1}a^3$ asynchronously 3-fellow travel. }
            \label{fig:k-fellow}
        \end{figure}
        \item If $x = \left(a^mt\right)^{\pm 1}$, $w\left(a^mt\right)^{\pm 1} = w'b^{-1}t^{\pm 1}a^{i \pm m} = w'\left(a^mt\right)^{\pm 1}b^{-1}a^{i \pm m} = ub^{-1}a^{i \pm m}$ by induction. 
        \item If $x = \left(a^nt\right)^{\pm 1}$, then this will be the same as above, with $m$ replaced by $n$. 
    \end{itemize}
    
    \item If $r_n = c^{-1}$, then this is the same as the case when $r_n = b^{-1}$, just replacing $m$ with $n$.

    \item If $r_n = b$, then $w = w'ba^i$.
    \begin{itemize}
        \item If $x = a^{\pm 1}$, then this will be in $L$.
        \item If $x = t^{\pm 1}$, then $wt^{\pm1} = w'bt^{\pm 1}a^i = w't^{\pm 1}ba^{i \mp m} = u ba^{i \mp m}$, by induction.
        \item If $x = \left(a^mt\right)^{\pm1}$, then $w\left(a^mt\right)^{\pm 1} = w'bt^{\pm 1}a^{i} = w't^{\pm1}ba^{i }$, by induction. 
        \item If $x = \left(a^nt\right)^{\pm 1}$, then this is the same as above, with $m$ replaced by $n$. 
    \end{itemize}
    \item If $r_n = c$, then this is the same as above with $m$ replaced by $n$. 
\end{enumerate}
\end{proof}

From this and chapter 7 of \cite{WP}, we get the following corollary.

\begin{corollary}
The word problem for \[G = \left< a, t, b_1, b_2, \ldots b_k \mid at = ta, b_1^{-1}tb_1 = a^{n_1}t, \ldots b_k^{-1}tb_k = a^{n_k}t \right> \text{ for } n_1, n_2, \ldots n_k \in \Z\] is solvable.
\end{corollary}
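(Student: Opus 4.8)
The plan is to establish asynchronous automaticity by exhibiting an explicit regular language of normal forms and verifying the asynchronous fellow traveler property directly, rather than constructing asynchronous automata. First I would set up coordinates: writing $G = \langle a,t \mid at=ta\rangle \cong \Z^2$ as the base, I would iterate two HNN extensions by $b$ and $c$ (the general case of $k$ stable letters being identical in structure). The associated subgroups in the $i$-th extension are $\langle t\rangle$ and $\langle a^{n_i}t\rangle$, so I would choose $\{a^i \mid i\in\Z\}$ as a common transversal for all of them — this is a regular language, and crucially the coset action of conjugation by $t^{\pm 1}$ on these transversals is just $a^i \mapsto a^{i\mp n_j}$, which is how stable letters "slide past" powers of $t$.

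Next I would define the candidate language $L = L_{\Z^2}\,\bigl(\{r a^i \mid r\in\{b^{\pm1},c^{\pm1}\},\ i\in\Z\}\bigr)^*$ where $L_{\Z^2} = \{a^i t^j \mid i,j\in\Z\}$, and invoke the Normal Form Theorem for HNN extensions to conclude $\pi\colon L\to G$ is a bijection (in particular surjective and finite-to-one). Then the work is the asynchronous $k$-fellow traveler property. I would introduce the notion of \emph{parallel stable letter structure} — two $L$-words having the same sequence of stable letters $r_1,\ldots,r_l$ — and prove by induction on word length the strengthened statement: for every $L$-word $w$ and every generator $x$ (enlarging the multiplier alphabet to $\{a^{\pm1}, t^{\pm1}, (a^{n_i}t)^{\pm1}\}$ to make the induction close up), there is an $L$-word $u =_G wx$ with $u$ and $w$ asynchronously $K$-fellow travelers and sharing parallel stable letter structure, for $K = \max_i(|n_i|+1)$. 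The induction proceeds by cases on the last stable letter $r_n$ of $w$ (or $w\in L_{\Z^2}$, handled by $\Z^2$ being $2$-combable) and on $x$: appending $a^{\pm1}$ stays in $L$ trivially; appending $t^{\pm1}$ or $(a^{n_j}t)^{\pm1}$ forces one to commute the new $t$-power past $r_n$ using the HNN relation, absorbing an $a$-power into the $a^i$ tail and leaving a shorter prefix $w'$ to which the inductive hypothesis applies.

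The key geometric estimate — and the step I expect to be the main obstacle to state cleanly — is the fellow-traveling bound in the case $x = t^{\pm 1}$, $r_n = b^{-1}$ (and its analogues): here $w'b^{-1}a^i t^{\pm1} =_G w'(a^{n_j}t)^{\pm1} b^{-1} a^{i\pm n_j}$, and one must check that the path reading across the top of the $b$-strip and the path going up, across, and down again track each other within $|n_j|+1$. The honest content is that after reparametrizing (this is where asynchrony is essential — the two paths have different lengths), at every stage the two fellow-travelers differ by either a bounded horizontal segment in some $a^{n_j}t$-plane (distance $|n_j|+1$) or a bounded segment after both have climbed the same stable letter (distance $1$); composing with the inductive $K$-fellow-traveling of $w'$ and $u$ keeps everything within $K$. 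I would present this via the figure and the explicit parametrization, then remark that all remaining cases ($r_n = b, c, c^{-1}$, and $x$ a $t$-power) are symmetric, obtained by swapping $m\leftrightarrow n$ or inverting. Finally, the corollary on solvability of the word problem is immediate from Chapter 7 of \cite{WP}, since asynchronously automatic groups have solvable word problem.
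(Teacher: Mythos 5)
Your proposal is correct and follows essentially the same route as the paper: the corollary is deduced immediately from the main theorem (asynchronous automaticity of $G$) together with the fact from Chapter 7 of \cite{WP} that asynchronously automatic groups have solvable word problem. Your sketch of the underlying automaticity argument — the normal-form language $L$, the parallel stable letter structure, and the case-by-case induction with bound $\max_i(|n_i|+1)$ — also matches the paper's proof of that theorem.
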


Lastly, we show that $G$ cannot be synchronously automatic with respect to the language we have constructed above. The following is theorem 3.3.4 from \cite{WP}.

\begin{theorem}\label{quasigeodesic-thm-wp}
Let $G$ be a group with automatic structure $(A, L)$. If there is a bound for the number of accepted representatives in $L$ of an element $g \in G$ as $g$ varies, then there exists $N$ such that any path in the Cayley graph of $G$ corresponding to an accepted string is an $(N, N)$-quasigeodesic. 
\end{theorem}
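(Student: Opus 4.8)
The plan is to treat the two halves of the $(N,N)$-quasigeodesic inequality separately. The upper bound $d(\widehat w(s),\widehat w(t))\le\abs{t-s}$ is automatic, since $\widehat w$ is an edge path traversed at unit speed. For the lower bound the heart of the matter is the single length estimate that there is a constant $N$ with $\abs w\le N\, d(1,\overline w)+N$ for every $w\in L$; granting this, the statement that $\widehat w$ itself is an $(N,N)$-quasigeodesic follows by applying the same estimate along subpaths, using the standard fact from \cite{WP} that in an automatic structure every prefix of an accepted word is uniformly tracked by an accepted word, and I regard this last upgrade as routine bookkeeping once the length estimate is in hand.

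To prove the length estimate, the key claim is: there is a constant $C$ such that whenever $u,v\in L$ satisfy $d(\overline u,\overline v)\le 1$ one has $\abs{\abs u-\abs v}\le C$. Given this, the estimate follows by chaining along a geodesic: for $w\in L$, write $D=d(1,\overline w)$, fix a geodesic $1=g_0,g_1,\dots,g_D=\overline w$, choose accepted representatives $h_i\in L$ of $g_i$ with $h_0$ a fixed accepted word for $1$ and $h_D=w$, and telescope the inequalities $\abs{\abs{h_i}-\abs{h_{i+1}}}\le C$ (together with the $D=0$ instance of the claim) to obtain $\abs w\le N\, d(1,\overline w)+N$ with $N=\abs{h_0}+C$.

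The key claim is where the work lies. Assume $d(\overline u,\overline v)\le 1$ and, relabelling, $\abs u\le\abs v$. Whichever equality recognizer $M_\epsilon$ or multiplier $M_x$ relates $u$ and $v$ (one does, since $d(\overline u,\overline v)\le 1$), the paths $\widehat u$ and $\widehat v$ synchronously $k$-fellow travel, where $k$ is the fellow-traveler constant of the structure. In particular, once $\widehat u$ has reached its endpoint $\overline u$, at every later time $j$ the point $\widehat v(j)$ lies in the ball $B(\overline u,k)$, a vertex set whose cardinality depends only on $k$ and the generating set $A$. Now consider, for $j=\abs u,\abs u+1,\dots,\abs v$, the pair consisting of the state of the word acceptor $W$ reached after reading the length-$j$ prefix of $v$, together with the vertex $\widehat v(j)$. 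If $\abs v-\abs u$ exceeds the number of possible such pairs, two of them coincide: there are indices $\abs u\le j_1<j_2\le\abs v$ with the same $W$-state and with $\widehat v(j_1)=\widehat v(j_2)$. Writing $v=v_1v_2v_3$ with $\abs{v_1}=j_1$ and $\abs{v_1v_2}=j_2$, the repeated $W$-state lets us pump, so $v_1v_2^iv_3\in L$ for all $i\ge 0$, while $\widehat v(j_1)=\widehat v(j_2)$ forces $\overline{v_2}=1$, so every $v_1v_2^iv_3$ represents $\overline v$. Since $\abs{v_2}=j_2-j_1\ge 1$ these are infinitely many distinct accepted representatives of $\overline v$, contradicting the hypothesis that each element has only boundedly many accepted representatives. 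Hence $\abs v-\abs u$ is bounded by a constant $C$ depending only on $k$, on $A$, and on the number of states of $W$, which is the claim.

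The step I expect to be the genuine obstacle is exactly this pumping argument, and in particular pinning down where the bounded-multiplicity hypothesis must enter: synchronous fellow travelling by itself does not bound the length of the longer of two nearby accepted words, since an arbitrarily long path can wander inside a fixed finite ball, so one must exploit the regularity of $L$ to manufacture a true state-and-vertex repetition and only then invoke the hypothesis to forbid the resulting infinite family of distinct accepted words for a single group element. Everything else — the trivial upper bound, the chaining, and the passage to subpaths — is comparatively mechanical.
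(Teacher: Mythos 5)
The paper does not actually prove this statement --- it is imported verbatim as Theorem 3.3.4 of \cite{WP} --- so there is no in-paper proof to compare against; I am judging your argument against the source. The core of your argument is correct and is essentially the standard one: the key claim that $d(\overline u,\overline v)\le 1$ forces $\abs{\abs u-\abs v}\le C$, proved by pigeonholing on the pair (acceptor state, vertex of $B(\overline u,k)$) and then invoking bounded multiplicity to forbid the pumped family $v_1v_2^iv_3$, together with the chaining along a geodesic to get $\abs w\le N\,d(1,\overline w)+N$, is exactly where you say it is and is carried out correctly.

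The genuine gap is the step you set aside as routine. The endpoint estimate does not imply the subpath estimate when $N>1$: schematically, a path that travels out distance $n/3$, makes a closed detour of length $n/3$, and travels out another $n/3$ satisfies $\abs w\le \tfrac32 d(1,\overline w)$ while its middle third is arbitrarily far from quasigeodesic. So the subpath bound needs its own argument, and the ``standard fact'' you invoke is not strong enough as used, for two reasons. First, even granting that each prefix $w(j)$ is synchronously $k'$-fellow-travelled by some $u_j\in L$ with $\overline{u_j}=\overline{w(j)}$, fellow travelling does not bound $\abs{\,\abs{u_j}-j\,}$: the longer path can linger arbitrarily long in the ball of radius $k'$ about the common endpoint. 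Ruling this out requires one more run of your pumping argument (the ``departure function'' of \cite{WP}, Lemma 3.2.2): an accepted word cannot remain in a ball of fixed radius for more than boundedly many steps, applied once to $w$ itself (whose path agrees with $\widehat{w(j)}$ up to time $j$) and once to $u_j$. Second, the prefix-tracking fact is itself not free: the obvious induction that peels off one letter and applies the multiplier $M_x$ adds $k$ to the tracking constant at each step, so it does not give a uniform constant without further work. With the departure-function step supplied --- and you have the tools, since it is the same pigeonhole on (state, vertex of a fixed ball) followed by bounded multiplicity --- the proof closes: $t-s\le\abs{\,t-\abs{u_t}\,}+\abs{\,\abs{u_t}-\abs{u_s}\,}+\abs{\,\abs{u_s}-s\,}$, with the outer terms bounded by the departure constant and the middle term by $C\,d(\widehat w(s),\widehat w(t))$ via your chaining. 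In short: right strategy and correct core, but the bounded-multiplicity hypothesis must be used a second time in the passage to subpaths, and your write-up omits that use.
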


\begin{theorem}\label{L-not-quasi}
The language \[
L = L_{\Z^2}
    \left(
        \{r a^i | r \in \{b,b^{-1},c,c^{-1}\}, i \in \Z \} 
    \right)^* 
\] is not quasi-geodesic. 
\end{theorem}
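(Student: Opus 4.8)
The plan is to exhibit, for each $k$, an accepted word $u_k\in L$ whose word-length grows quadratically in $k$ while the group element $\overline{u_k}$ it represents lies at distance $O(k)$ from $1$. Since a path that is an $(N,N)$-quasigeodesic from $1$ to $\overline{u_k}$ must in particular satisfy $|u_k|\le N\,d(1,\overline{u_k})+N^{2}$, no single $N$ can then work, so $L$ fails to be quasi-geodesic.

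I would carry out the argument for $G=\left\langle a,t,b,c\mid at=ta,\ b^{-1}tb=a^{m}t,\ c^{-1}tc=a^{n}t\right\rangle$ with $m\neq 0$; the general family is identical after replacing $b$ by any $b_i$ with $n_i\neq 0$ (if every $n_i=0$ then $G\cong\Z\times F_{k+1}$, $L$ is quasi-geodesic, and the statement is vacuous, so we assume this does not happen, as for $G^{1,2}$). The only fact about $G$ that is needed is the rewriting rule $bt=tba^{-m}$ derived from the defining relation, together with $at=ta$. Using these I would prove by induction on $k$ the identity
\[
(bt)^{k}\;=\;t^{k}\,\bigl(b\,a^{-km}\bigr)\bigl(b\,a^{-(k-1)m}\bigr)\cdots\bigl(b\,a^{-m}\bigr)\qquad\text{in }G.
\]
In the inductive step one multiplies the expression for $(bt)^{k-1}$ on the right by $bt$, replaces the trailing $bt$ by $tba^{-m}$, and then carries the freed $t$ all the way to the front through the blocks $b\,a^{-jm}$, using $\bigl(b\,a^{-jm}\bigr)t=t\bigl(b\,a^{-(j+1)m}\bigr)$ at each stage; the net effect is that the $k-1$ old blocks each have their $a$-exponent decreased by $m$ and a new block $b\,a^{-m}$ is appended on the right. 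Call the right-hand side $u_k$. It lies in $L=L_{\Z^2}\bigl(\{r a^{i}\}\bigr)^{*}$ because its prefix $t^{k}$ is in $L_{\Z^2}$ and the remainder is a concatenation of blocks of the form $b\,a^{i}$; in fact $u_k$ is the (unique) normal form of $(bt)^{k}$, since it involves no $b^{\pm1}$-pinch and no subword $b\,1\,b$, though we will not need this.

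Granting the identity, both length estimates are one line. First, $|u_k|=k+\sum_{i=1}^{k}\bigl(1+im\bigr)=2k+\tfrac{m}{2}k(k+1)$, so $|u_k|\ge\tfrac{m}{2}k^{2}$. Second, $\overline{u_k}=\overline{(bt)^{k}}$, and $(bt)^{k}$ is a word of length $2k$ in the generators, so $d(1,\overline{u_k})\le 2k$. If $\widehat{u_k}$ were an $(N,N)$-quasigeodesic, then applying the quasigeodesic inequality to the two endpoints of the path would give $|u_k|\le N\,d(1,\overline{u_k})+N^{2}\le 2Nk+N^{2}$, which is false for all $k$ large enough. Hence for every $N$ there is an $L$-word whose path is not an $(N,N)$-quasigeodesic, so $L$ is not quasi-geodesic. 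Combined with Theorem~\ref{quasigeodesic-thm-wp} — applicable since, by the Normal Form Theorem, each element of $G$ has a single representative among the normal forms in $L$ — this also shows that $G$ is not synchronously automatic with respect to $L$.

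The substantive step is the inductive identity for $(bt)^{k}$: one has to push a power of $a$ (and then a single $t$) through an alternating word in $t$ and $b$, repeatedly invoking $bt=tba^{-m}$ and $at=ta$, while keeping careful track that the $j$-th block ends up with $a$-exponent exactly $-jm$ so that the blocks together contribute $\Theta(k^{2})$ letters. I expect this bookkeeping, and the routine verification that $u_k$ has the syntactic shape required for membership in $L$, to be the only real obstacle; the comparison of the two length bounds is immediate once the identity is established.
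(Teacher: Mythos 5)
Your proof is correct and follows essentially the same strategy as the paper's: exhibit a family of accepted words whose lengths grow quadratically while the group elements they represent lie at only linear distance from the identity, so that no $(N,N)$-quasigeodesic constant can work. The paper uses the elements $c^{-k}t^{j}$ where you use $(bt)^{k}$, and your version is carried out more carefully (the explicit inductive identity, the length bookkeeping, and the observation that some $n_i$ must be nonzero), but the underlying idea is identical.
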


\begin{proof}
Consider the geodesic element $c^{-k}t^j$. It is of length $k + j$, when we put it in its normal form it becomes \begin{align*}
    c^{-k}t^j =&\; c^{-k+1}a^nt^jc^{-1}t^{j-1}\\
    =&\; a^nt(c^{-1}a^n)^jc^{-1}t^{j-1}\\
    =&\; (a^nt)^j((c^{-1}a^n)^k)^jc^{-1}
\end{align*} which is of length $nj+j+kj+nkj+1$. 
\end{proof}

Thus we have: 
\begin{theorem}
The language \[
L = L_{\Z^2}
    \left(
        \{r a^i | r \in \{b,b^{-1},c,c^{-1}\}, i \in \Z \} 
    \right)^* 
\] is not synchronously automatic.
\end{theorem}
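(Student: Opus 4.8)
The plan is to obtain the statement as a short consequence of Theorems~\ref{quasigeodesic-thm-wp} and~\ref{L-not-quasi}, arguing by contradiction. Suppose that the generating set $A = \set{a^{\pm 1}, t^{\pm 1}, b^{\pm 1}, c^{\pm 1}}$ together with $L$ were part of a synchronous automatic structure on $G$. To apply Theorem~\ref{quasigeodesic-thm-wp} I must first check its hypothesis, namely that the number of accepted $L$-representatives of an element $g \in G$ is bounded independently of $g$. This is immediate from our construction: by design each $w = w_0 r_1 w_1 \cdots r_l w_l \in L$ is a normal form in the sense of the definition preceding the Normal Form Theorem, with $w_0 = a^{\alpha}t^{\beta}$ the unique $L_{\Z^2}$-representative of an element of the $\Z^2$-base and each $w_i = a^{\gamma_i}$ the chosen coset representative from $\set{a^i \mid i \in \Z}$; by part~(2) of the Normal Form Theorem every element of $G$ has exactly one such representative. (If one worries about words containing a subsequence $r\,a^0\,r$, intersect $L$ with the regular language of words avoiding such subsequences; this changes nothing below.) So the fiber bound is $1$.

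Theorem~\ref{quasigeodesic-thm-wp} then provides a constant $N$ such that for every $w \in L$ the path $\widehat{w}$ in the Cayley graph of $G$ is an $(N,N)$-quasigeodesic. But this contradicts Theorem~\ref{L-not-quasi}: there the elements $c^{-k}t^j$ are geodesic of length $k + j$, while their unique $L$-representatives have length $nj + j + kj + nkj + 1 = j(n+1)(k+1) + 1$; setting $k = j$ and letting $j \to \infty$ produces representatives of length on the order of $(n+1)j^2$ for elements at distance $2j$ from the identity, so no fixed $N$ can make every $\widehat{w}$ an $(N,N)$-quasigeodesic. This contradiction shows that $A$ and $L$ cannot be part of any synchronous automatic structure on $G$; that is, $L$ is not synchronously automatic.

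Two points are worth pinning down, and they are the only places requiring a little work. First, Theorem~\ref{L-not-quasi} is useful only once we know $c^{-k}t^j$ is genuinely geodesic; this follows by passing to the quotient of $G$ by the normal closure of $a$, which collapses both HNN relations to $[t,b] = [t,c] = 1$ and so is isomorphic to $\Z \times F(b,c)$, in which the image of $c^{-k}t^j$ has length exactly $k+j$ — and a quotient homomorphism does not increase word length, while the word $c^{-k}t^j$ itself realizes the upper bound. Second, for the blow-up to occur we need the relevant exponent $n$ nonzero, which is part of what ``linearly mismatched'' means (if $n = 0$ then $c$ commutes with $t$ and this family of normal forms does not grow). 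I expect no serious obstacle: the essential content — the quadratic growth of $L$-normal forms — is already supplied by Theorem~\ref{L-not-quasi}, so the remaining task is the bookkeeping above, with the verification of the bounded-fiber hypothesis and of the geodesicity of $c^{-k}t^j$ being the only mildly delicate steps.
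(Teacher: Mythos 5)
Your proposal is correct and follows exactly the paper's route: assume $L$ is part of a synchronous automatic structure, use the normal form theorem to bound the number of accepted representatives, invoke Theorem~\ref{quasigeodesic-thm-wp} to get quasigeodesity, and contradict Theorem~\ref{L-not-quasi}. You additionally supply details the paper leaves implicit --- the verification that $c^{-k}t^j$ is genuinely geodesic via the quotient $G/\langle\langle a\rangle\rangle \cong \Z \times F(b,c)$, the explicit $k=j$ blow-up, and the need for $n \neq 0$ --- which only strengthen the argument.
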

\begin{proof}
The language $L$ was constructed using the normal form theorem, and thus is a finite-to-one map. By theorem \ref{L-not-quasi}, our language is not quasi-geodesic, and thus by theorem \ref{quasigeodesic-thm-wp}, $L$ cannot be synchronously automatic. 
\end{proof}

\bibliographystyle{amsplain}
\bibliography{references}

\providecommand{\bysame}{\leavevmode\hbox to3em{\hrulefill}\thinspace}
\providecommand{\MR}{\relax\ifhmode\unskip\space\fi MR }
\providecommand{\MRhref}[2]{%
  \href{http://www.ams.org/mathscinet-getitem?mr=#1}{#2}
}
\providecommand{\href}[2]{#2}
\begin{thebibliography}{1}

\bibitem{BB00}
Noel Brady and Martin Bridson, \emph{There is only one gap in the isoperimetric
  spectrum}, Geometric and Functional Analysis \textbf{10} (2000), 1053--1070.

\bibitem{Elder}
Murray Elder, \emph{Automaticity, almost convexity and flasification by fellow
  traveler properties of some finitely presented groups}, PhD Thesis, The
  University of Melbourne (2000).

\bibitem{WP}
David Epstein, J.W. Cannon, D.F. Holt, S.V.F. Levy, M.S. Paterson, and W.P.
  Thurston, \emph{Word processing in groups}, Jones and Bartlett Publishers,
  1992.

\bibitem{Ger94}
S.M. Gersten, \emph{The automorphism group of a free group is not a cat(0)
  group}, Proc, Amer. Math. Soc. \textbf{121(4)} (1994), 999--1002.

\bibitem{Holt}
Derek~F. Holt, \emph{Automatic groups, subgroups, and cosets}, Geometry \&
  Topology Monographs \textbf{1} (1998), 249--260.

\bibitem{Leary-Min}
Ian Leary and Ashot Minasyan, \emph{Commensurating hnn-extensions: non-positive
  curvature and biautomaticity}, Geometry \& Topology \textbf{25(4)} (2021),
  1819--1860.

\bibitem{Lyndonn-Schupp}
R.C. Lyndon and P.E. Schupp, \emph{Combinatorial group theory},
  Springer-Verlag, 1977.

\bibitem{NibloReeves}
Graham~A. Niblo and Lawrence Reeves, \emph{The geometry of cube complexes and
  the complexity of their fundamental groups}, Topology \textbf{37} (1998),
  621–633.

\bibitem{SS08}
Saul Schleimer, \emph{Polynomial-time word problems}, Commentarii Mathematici
  Helvetici \textbf{83} (2008), 741--765.

\end{thebibliography}

\end{document}